\DeclareSymbolFont{largesymbol}{OMX}{yhex}{m}{n}
\DeclareMathAccent{\Widehat}{\mathord}{largesymbol}{"62}
\definecolor{verylight}{gray}{0.97}
\definecolor{light}{gray}{0.9}
\definecolor{medium}{gray}{0.85}
\definecolor{dark}{gray}{0.6}
 \def\G{{\mathcal G}}
 \def\opn#1#2{\def#1{\operatorname{#2}}} 
 \opn\chara{char} \opn\length{\ell} \opn\pd{pd} \opn\rk{rk}
 \opn\projdim{proj\,dim} \opn\injdim{inj\,dim} \opn\rank{rank}
 \opn\depth{depth} \opn\grade{grade} \opn\height{height}
 \opn\embdim{emb\,dim} \opn\codim{codim}
 \opn\Tr{Tr} \opn\bigrank{big\,rank}
 \opn\superheight{superheight}\opn\lcm{lcm}
 \opn\trdeg{tr\,deg}
 \opn\reg{reg} \opn\lreg{lreg} \opn\ini{in} \opn\lpd{lpd}
 \opn\size{size} \opn\sdepth{sdepth}
 \opn\link{link}\opn\fdepth{fdepth}\opn\lex{lex}
 \opn\tr{tr}
 \opn\type{type}
 \opn\Borel{Borel}
\opn\cdeg{cdeg}
 \opn\div{div} \opn\Div{Div} \opn\cl{cl} \opn\Cl{Cl}
 \opn\Spec{Spec} \opn\Supp{Supp} \opn\supp{supp} \opn\Sing{Sing}
 \opn\Ass{Ass} \opn\Min{Min}\opn\Mon{Mon}
 \opn\Ann{Ann} \opn\Rad{Rad} \opn\Soc{Soc}
 \opn\Im{Im} \opn\Ker{Ker} \opn\Coker{Coker} \opn\Am{Am}
 \opn\Hom{Hom} \opn\Tor{Tor} \opn\Ext{Ext} \opn\End{End}
 \opn\Aut{Aut} \opn\id{id}
 \opn\nat{nat}
 \opn\pff{pf}
 \opn\Pf{Pf} \opn\GL{GL} \opn\SL{SL} \opn\mod{mod} \opn\ord{ord}
 \opn\Gin{Gin} \opn\Hilb{Hilb}\opn\sort{sort}
 \opn\PF{PF}\opn\Ap{Ap}
 \opn\aff{aff} \opn
\opn\relint{relint} \opn\st{st}
 \opn\lk{lk} \opn\cn{cn} \opn\core{core} \opn\vol{vol}  \opn\inp{inp} \opn\nilpot{nilpot}
 \opn\link{link} \opn\star{star}\opn\lex{lex}\opn\set{set}
 \opn\width{wd}
 \opn\Fr{F}
 \opn\QF{QF}
 \opn\G{G}
 \opn\type{type}\opn\res{res}
 \opn\gr{gr}
  \def\cdeg{deg}
 \def\pot#1#2{#1[\kern-0.28ex[#2]\kern-0.28ex]}
 \opn\dirlim{\underrightarrow{\lim}}
 \opn\inivlim{\underleftarrow{\lim}}
 \def\Implies{\ifmmode\Longrightarrow \else
         \unskip${}\Longrightarrow{}$\ignorespaces\fi}
 \def\implies{\ifmmode\Rightarrow \else
         \unskip${}\Rightarrow{}$\ignorespaces\fi}
 \def\iff{\ifmmode\Longleftrightarrow \else
         \unskip${}\Longleftrightarrow{}$\ignorespaces\fi}
 \newtheorem{Theorem}{Theorem}[section]
 \newtheorem{Lemma}[Theorem]{Lemma}
 \newtheorem{Corollary}[Theorem]{Corollary}
 \newtheorem{Remark}[Theorem]{Remark}
 \newtheorem{Example}[Theorem]{Example}
 \newtheorem{Definition}[Theorem]{Definition}
 \let\epsilon\varepsilon
 \let\kappa=\varkappa
 \def\qed{\ifhmode\textqed\fi
       \ifmmode\ifinner\quad\qedsymbol\else\dispqed\fi\fi}
 \def\textqed{\unskip\nobreak\penalty50
        \hskip2em\hbox{}\nobreak\hfil\qedsymbol
        \parfillskip=0pt \finalhyphendemerits=0}
 \def\dispqed{\rlap{\qquad\qedsymbol}}
 \opn\dis{dis}
 \def\pnt{{\raise0.5mm\hbox{\large\bf.}}}
 \opn\Lex{Lex}
\begin{document}

\title {Freiman cover ideals of unmixed bipartite graphs }

\author {Guangjun Zhu$^{^*}$,  Yakun Zhao and  Yijun Cui }

\address{Authors¡¯ address:  School of Mathematical Sciences, Soochow University, Suzhou 215006, P. R. China}
\email{zhuguangjun@suda.edu.cn(Corresponding author:Guangjun Zhu),
\linebreak[4]1768868280@qq.com(Yakun Zhao),237546805@qq.com(Yijun Cui).}

\dedicatory{ }

\begin{abstract}
An equigenerated monomial ideal $I$ in the polynomial ring $R=k[z_1,\ldots, z_n]$ is a
Freiman ideal if $\mu(I^2)=\ell(I)\mu(I)-{\ell(I)\choose 2}$ where $\ell(I)$ is the analytic spread of $I$ and $\mu(I)$ is the number of minimal generators of $I$.  In this paper we classify all simple  connected unmixed bipartite graphs  whose cover ideals  are Freiman ideals.
\end{abstract}

\thanks{* Corresponding author}

 \keywords{Hibi ideals, Cover ideals of graphs, Unmixed bipartite graphs, Cohen-Macaulay bipartite graphs}

 \subjclass[2010]{Primary 13F20; Secondary  13H10, 13C99, 13E15}



 \maketitle

 \setcounter{tocdepth}{1}

 \section*{Introduction}
Let $I$ be a monomial ideal in the polynomial ring $R=k[z_1,\ldots,z_n]$ over a field $k$. Let $\mu(I)$
denote the least number of generators of $I$.
It is  a very difficult problem to exactly compute  $\mu(I^k)$ for each integer $k\geq 2$.
If $I$ is generated by a regular sequence and $\mu(I)=m$, it is well known that $\mu(I^k)={k+m-1\choose m-1}$,
which is the maximal that $\mu(I^k)$ can reach.
At the other extreme, Eliahou et. al. \cite{EHS} constructed ideals of height $2$ in $k[x,y]$ such that
 $\mu(I^2)=9$ and $\mu(I)=m$, where $m\geq 6$ can be any integer.

As a consequence of a well known theorem from additive number theory, due to Freiman \cite{Fre}, Herzog et. al. showed in  \cite[Theorem 1.8]{HMZ} that if $I$ is an equigenerated monomial ideal, that is, all its generators are of the same degree,
then $\mu(I^2)\geq \ell(I)\mu(I)-{\ell(I)\choose 2}$,
 where $\ell(I)$ is the analytic spread of $I$. If the equality holds, then Herzog and Zhu \cite{HZ1} called this ideal $I$ to be a {\em Freiman} ideal (or simply Freiman).
Herzog et. al. provided in \cite[Theorem 2.3]{HHZ} several equivalent conditions for an equigenerated monomial ideal $I$ to be a Freiman ideal.
For example, they show that $I$ is Freiman if and only if $\mu(I^k)={\ell(I)+k-2\choose k-1}\mu(I)-(k-1){\ell(I)+k-2\choose k}$
for some $k\geq 2$ if and only if the fiber
cone $F(I)$ of $I$   has minimal multiplicity if and only if  $F(I)$  is Cohen-Macaulay  and its defining ideal has a $2$-linear resolution.
It is a very restrictive condition for ideals arising from combinatorial structures, which often guarantees strong combinatorial properties.

Freiman ideals in the classes of principal Borel ideals, Hibi ideals, Veronese type ideals, matroid ideals,  sortable ideals, edge ideals of
graphs, and  cover ideals of  some classes of graphs such as trees, circulant graphs, and whiskered graphs have been studied (see \cite{DG,HHZ,HZ1,HZ2}).

The  purpose of this paper is to  give a full classification of Freiman ideals for cover ideals of all connected unmixed bipartite graphs.

Let $[n]$ denote the set $\{1,\ldots,n\}$. Thanks to this fact, let  $G$ be  an unmixed bipartite graph on the vertex set $V(G)=\{x_1,\dots,x_n\}\cup \{y_1,\dots, y_n\}$, then each of its minimal vertex covers has the form $\{x_{i_1},\ldots,x_{i_s},y_{i_{s+1}}, \ldots,y_{i_n}\}$, where $\{i_1,\ldots, i_n\}=[n]$.   Therefore, the set
$$\mathcal{L}_G=\{C\cap \{x_1,\ldots,x_n\} \mid C \text{\ is a minimal vertex cover of \ }G\}$$
is a sublattice of the Boolean
lattice $\mathcal{L}_n$ on $P_n=\{p_1,\ldots,p_n\}$ such that $\emptyset\in \mathcal{L}_G$ and $P_n\in \mathcal{L}_G$.
In section 2, we associated the lattice $\mathcal{L}_G$ with its Hibi ideal $H_{\mathcal{L}_G}$,
 which is exactly the cover ideal of $G$, and show that  the result  as follows

\begin{Theorem}
Let  $G$ be a  connected unmixed bipartite graph on the vertex set $V_n=\{x_1,\dots, x_n\}\cup \{y_1,\dots, y_n\}$, then  $H_{\mathcal{L}_{G}}$ is Freiman  if and only if  $G$ is an almost  complete C-M bipartite graph,   or
there exist some  complete bipartite subgraphs $K_{m_{1},m_{1}},\ldots,K_{m_{s},m_{s}}$ of $G$ such that the induced graph of $G$ on
$V_n\setminus (\bigcup\limits_{i=1}^s V_{m_i-1})$ is an almost  complete C-M bipartite graph, where
 $V_{m_i}=\{x_{i1},\ldots, x_{im_i}\}\cup \{y_{i1},\ldots, y_{im_i}\}$ is  the vertex set of $K_{m_{i},m_{i}}$ for any $i\in [s]$.
\end{Theorem}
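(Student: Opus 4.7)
The plan is to translate the problem entirely into the language of finite distributive lattices and their underlying posets via the Birkhoff correspondence, apply a structural characterization of Freiman Hibi ideals, and then decode this characterization back in graph-theoretic terms.

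First I would set up the standard correspondence. For a connected unmixed bipartite graph $G$ on $V_n=\{x_1,\dots,x_n\}\cup\{y_1,\dots,y_n\}$, the lattice $\mathcal{L}_G$ (of intersections of minimal vertex covers with $\{x_1,\dots,x_n\}$) is a finite distributive lattice containing both $\emptyset$ and $P_n$; by Birkhoff there is a unique finite poset $P_G$ with $\mathcal{L}_G \iso \mathcal{J}(P_G)$. As noted in the excerpt, the cover ideal of $G$ coincides with the Hibi ideal $H_{\mathcal{L}_G}=H_{\mathcal{J}(P_G)}$. Using the Herzog--Hibi dictionary between connected unmixed bipartite graphs and finite posets, complete bipartite subgraphs $K_{m,m}\subseteq G$ correspond to antichains of size $m$ in $P_G$, while the Cohen--Macaulay condition for $G$ corresponds to $P_G$ being a naturally labeled poset obtained from a certain chain structure; in particular an almost complete C-M bipartite graph will correspond to a poset with an essentially chain-like skeleton.

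Next I would invoke the Freiman characterization of Hibi ideals that the authors prove in Section 2 (compatible with \cite{HZ1,HZ2,HHZ}): $H_{\mathcal{J}(P)}$ is Freiman precisely when the Hibi ring $R[\mathcal{J}(P)]$ (which is the fiber cone of $H_{\mathcal{J}(P)}$) has minimal multiplicity with a $2$-linear defining ideal. Combinatorially this forces $P$ to have a very restricted shape: outside of at most one "long chain-like" piece, $P$ may contain only antichains that are attached as direct summands. Concretely, the plan is to show that $P_G$ is Freiman if and only if it decomposes as a chain-like poset whose associated bipartite graph is almost complete C-M, together with some number of disjoint antichains of sizes $m_1,\dots,m_s$ producing the subgraphs $K_{m_i,m_i}$ in $G$. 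Removing all but one representative from each such antichain recovers the chain-like part of $P_G$, which on the graph side is exactly the almost complete C-M induced subgraph on $V_n\setminus\bigcup_{i=1}^sV_{m_i-1}$.

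The two directions of the theorem would then be handled in parallel. For the "if" direction, given a decomposition of $G$ of the prescribed form, I would construct the corresponding poset decomposition of $P_G$ explicitly and directly verify via the Section 2 criterion that $H_{\mathcal{L}_G}$ is Freiman (checking $\mu(I^2) = \ell(I)\mu(I) - \binom{\ell(I)}{2}$, where $\ell(I)$ equals $\dim R[\mathcal{J}(P_G)] = |P_G|+1$, and $\mu(I) = |\mathcal{J}(P_G)|$). For the "only if" direction, I would argue by contrapositive: if $G$ does not have the stated form, locate either a forbidden poset configuration in $P_G$ (such as two independent non-trivial chains interacting nontrivially) or a Hibi relation forcing $F(I)$ to fail minimal multiplicity, and derive a strict inequality $\mu(I^2) > \ell(I)\mu(I) - \binom{\ell(I)}{2}$.

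The main obstacle I anticipate is the "only if" direction: ruling out all intermediate graph configurations that are neither almost complete C-M nor built from a C-M core plus disjoint $K_{m_i,m_i}$ blocks. This requires a careful case analysis of how antichains and chains can coexist in $P_G$ without destroying the Freiman property, i.e.\ showing that any mixing between a nontrivial antichain block and the C-M chain block forces an extra Hibi generator in $I^2$ beyond the Freiman bound. Keeping track of the exact combinatorial count of pairs of join/meet relations in $\mathcal{J}(P_G)$ and matching it against $\binom{\ell(I)}{2}$ will be the technical heart of the argument.
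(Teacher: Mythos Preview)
Your proposal contains a genuine conceptual error in the dictionary you set up. You claim that an induced $K_{m,m}$ in $G$ corresponds to an antichain of size $m$ in the Birkhoff poset $P_G$. This is backwards. Define on $[n]$ the relation $p_i \preceq p_j \Longleftrightarrow \{x_i,y_j\}\in E(G)$; by Lemma~\ref{unmixed} this is always a \emph{preorder}, and by Theorem~\ref{complete} it is a partial order precisely when $G$ is Cohen--Macaulay. An induced $K_{m,m}$ on $\{x_{i_1},\dots,x_{i_m}\}\cup\{y_{i_1},\dots,y_{i_m}\}$ means $p_{i_k}\preceq p_{i_\ell}$ for \emph{all} $k,\ell$, i.e.\ these $m$ elements form a single equivalence class of the preorder; in the quotient poset (which is exactly your $P_G$ via Birkhoff) they collapse to one point, not to an $m$-element antichain. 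Conversely, an antichain in $P_G$ corresponds to pairwise \emph{incomparable} vertices, hence to an induced matching in $G$, the opposite of $K_{m,m}$. Because your whole decomposition strategy (``chain-like core plus disjoint antichains'') is built on this mistaken identification, the plan as written does not go through.

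Once the dictionary is corrected, the structure of the argument simplifies considerably and the hard ``only if'' case analysis you anticipate disappears. The induced subgraph $H$ of $G$ on $V_n\setminus\bigcup_i V_{m_i-1}$ is exactly the Cohen--Macaulay bipartite graph $G(P_G)$ attached to the quotient poset, so the question reduces to comparing the Freiman property for $H_{\mathcal L_G}$ and $H_{\mathcal L_H}$. The paper does this computationally (Theorem~\ref{main}): the generators of $H_{\mathcal L_G}$ are obtained from those of $H_{\mathcal L_H}$ by the monomial substitution $x_m\mapsto x_1\cdots x_m$, $y_m\mapsto y_1\cdots y_m$, and one checks directly that $\mu$, $\mu(I^2)$ and $\ell$ are unchanged (the last via a rank computation, Lemma~\ref{rank}). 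Equivalently, and closer in spirit to what you intended, one can observe that this substitution induces an isomorphism of fiber cones $F(H_{\mathcal L_H})\iso F(H_{\mathcal L_G})$, so both are Hibi rings of the \emph{same} poset $P_G$; then one invokes \cite[Theorem~4.4]{HZ1} (Theorem~\ref{zhao} here) to conclude that Freimanness holds iff $P_G\setminus\{p\}$ is a chain for some $p$, which translates back to $H$ being almost complete C--M. Either way, the reduction step is the entire content; no separate forbidden-configuration analysis is needed.
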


\medskip
We greatfully acknowledge the use of the computer algebra system CoCoA (\cite{Co}) for our experiments.

\medskip

\section{Preliminaries}
In this section, we gather together the needed definitions and basic facts, which
will be used throughout this paper. However, for more details, we refer the reader
to \cite{BM,HH1,HZ1,HHZ,Va}.

Let $G$  be a simple (i.e., finite, undirected, loopless and without multiple
edges) graph with the vertex set $V(G)$ and the edge set $E(G)$. Throughout this paper, we assume that $G$ is connected without isolated vertices. A vertex cover of
$G$ is a subset $C\subset V(G)$ such that  each edge has at least one
vertex in $C$.  Such a vertex cover $C$ is called minimal if no subset $C'\subsetneq C$ is a vertex cover
of $G$. The  graph $G$ is unmixed   (also called well-covered)  if all minimal vertex covers of $G$ have
the same cardinality.  The graph  $G$  is called  bipartite if there are two disjoint subsets $W,W'\subset V(G)$ such that $V(G)=W\cup W'$, and $E(G)\subseteq W\times W'$.

As stated before, if $G$ is an unmixed bipartite graph on the vertex set $V(G)=\{x_1,\dots,x_n\}\cup \{y_1,\dots, y_n\}$, then  each of its minimal vertex covers has the form $\{x_{i_1},\ldots,x_{i_s},y_{i_{s+1}}, \ldots,y_{i_n}\}$, where $\{i_1,\ldots, i_n\}=[n]$.  It is shown in \cite{HHO} that the set
$$\mathcal{L}_G=\{C\cap \{x_1,\ldots,x_n\} \mid C \text{\ is a minimal vertex cover of \ }G\}$$
is a sublattice of the Boolean
lattice $\mathcal{L}_n$ on $P_n=\{p_1,\ldots,p_n\}$ such that $\emptyset\in \mathcal{L}_G$ and $P_n\in \mathcal{L}_G$, and for any  sublattice $\mathcal{L}$ of $\mathcal{L}_n$ such that $\emptyset\in \mathcal{L}$ and $P_n\in \mathcal{L}$, there exists an unmixed bipartite graph $G$ such that $\mathcal{L}=\mathcal{L}_G$.
Attached to this lattice $\mathcal{L}_G$ is a monomial ideal $H_{\mathcal{L}_G}$ in the polynomial ring
$S=k[x_1,\dots,x_n,y_1,\dots,y_n]$, this ideal is called the {\em Hibi ideal} of $\mathcal{L}_G$.
To define $H_{\mathcal{L}_G}$, let $\mathcal{J}(P_n)$ be the set of poset ideals
of $P_n$. Recall that a subset $J\subset P_n$ is called a poset ideal of $P_n$, if for all $p\in J$ with $q\in  P_n$ and $q\leq p$, it
follows that $q\in J$. Now $H_{\mathcal{L}_G}$ is defined as follows:
$$
H_{\mathcal{L}_G}=(\{u_J\}_{J\in \mathcal{J}(P_n)}), \text{\ \ where\ \ } u_J=(\prod\limits_{p\in J}x_p)(\prod\limits_{p\in P_n\setminus J}y_p).
$$
The ideal
$$I(G)=({x_iy_j |\{x_i,y_j\} \in E(G)})$$
is called the edge ideal of $G$. Note that $H_{\mathcal{L}_G}$  is the Alexander dual of $I(G)$ and  actually the cover ideal of $G$.
Since each $u_J$ is a squarefree monomial of degree $n$, and $H_{\mathcal{L}_G}$  is  an equigenerated monomial ideal.

 We say that  a simple graph $G$ is Cohen-Macaulay (C-M for short)  if the quotient ring $R/I(G)$ is Cohen-Macaulay.  Observe that if $G$ is C-M then $G$ is unmixed by \cite{BH}. A graph theoretical characterization of C-M bipartite graphs was given in \cite{HH2} and that of unmixed bipartite graphs was given in \cite{Vi}.

\begin{Lemma}\label{CM}{\em(\cite[Theorem 3.4]{HH2})}
Let $G$ be a bipartite graph  with the bipartition of its vertices
$V(G)=\{x_1,\dots, x_m\}\cup \{y_1,\dots, y_n\}$. Then  $G$ is   C-M  if and only if $n=m$, and there is a labeling such that
\begin{itemize}
\item[(i)] $\{x_i,y_i\}\in  E(G)$ for all $i\in [n]$;
\item[(ii)] If $\{x_i,y_j\}\in  E(G)$, then $i\leq j$;
\item[(iii)] If $\{x_i,y_j\},\{x_j,y_k\}\in  E(G)$  with $i<j<k$, then $\{x_i,y_k\}\in  E(G)$.
\end{itemize}
\end{Lemma}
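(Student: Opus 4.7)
The plan is to prove the two implications of Lemma \ref{CM} separately, relying on a perfect-matching analysis together with induction for the forward direction, and on a shellability (or equivalently Alexander-dual linear resolution) argument for the reverse direction.

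For the ``only if'' direction, I would start from the fact that any Cohen--Macaulay ring is unmixed, so all minimal vertex covers of $G$ have the same cardinality; combined with K\"onig's theorem for bipartite graphs and the hypothesis of no isolated vertices, this forces $m = n$ and guarantees the existence of a perfect matching in $G$. Relabeling so that $\{x_i, y_i\}$ is an edge for every $i$ gives condition (i). To obtain (ii) and (iii), I would proceed by induction on $n$. The key step is to exhibit a ``simplicial'' vertex $x_1$ (say) whose link in the independence complex $\Delta(G)$ is itself Cohen--Macaulay of one smaller dimension; the existence of such an $x_1$ is forced by the Cohen--Macaulay property together with Reisner's criterion (every face link in a C--M complex is C--M). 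Deleting $x_1$ and its matched partner $y_1$ yields a smaller Cohen--Macaulay bipartite graph to which induction applies, and choosing $x_1$ extremally (for instance, so that $y_1$ has minimal neighborhood, or so that its index comes first in a suitably chosen total order) ensures that the labeling produced by induction glues consistently with the ambient graph to satisfy (ii) and (iii) globally.

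For the ``if'' direction, I would assume conditions (i)--(iii) and prove that $\Delta(G)$ is shellable, which by Reisner's criterion gives Cohen--Macaulayness of $R/I(G)$. A natural shelling order lists the facets of $\Delta(G)$ (the maximal independent sets of $G$) in a lexicographic order dictated by the labeling. For any two facets $F' < F$ in this order, conditions (ii) and (iii) allow one to find $v \in F \setminus F'$ and a facet $F'' < F$ with $F \setminus F'' = \{v\}$, which is exactly the shelling condition. Alternatively, one can proceed via the Eagon--Reiner theorem, translating (i)--(iii) into a combinatorial condition on the distributive lattice $\mathcal{L}_G$ that guarantees the Alexander-dual ideal $H_{\mathcal{L}_G}$ has a linear resolution.

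The main obstacle is the forward direction, specifically extracting the appropriate simplicial vertex from an abstract Cohen--Macaulay bipartite graph: this requires careful use of the interaction between depth, localization, and the link operation on the Stanley--Reisner complex, since the Cohen--Macaulay hypothesis does not a priori single out such a vertex. A secondary subtlety is to make the inductive labeling satisfy conditions (ii) and (iii) consistently across the whole graph rather than only on the deleted subgraph; this is handled by choosing the simplicial vertex according to an extremal rule so that reinsertion of $x_1, y_1$ at the top (or bottom) of the labeling automatically preserves the required partial order relations.
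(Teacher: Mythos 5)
This lemma is not proved in the paper at all: it is quoted verbatim from \cite[Theorem 3.4]{HH2}, so your attempt has to be measured against the standard proof in that reference rather than against anything in this manuscript. Your opening reduction is correct and is the standard one: both sides of the bipartition are minimal vertex covers (there are no isolated vertices), Cohen--Macaulayness implies unmixedness, hence $m=n$, and K\"onig's theorem then produces the perfect matching giving (i). The ``if'' direction as you outline it is also the standard route --- show that the cover ideal $H_{\mathcal{L}_G}=I(G)^{\vee}$ has linear quotients, equivalently that the independence complex is shellable, and invoke Eagon--Reiner --- although you assert rather than verify the exchange condition for your lexicographic order.

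The genuine gap is in the ``only if'' direction. You claim the required extremal vertex ``is forced by the Cohen--Macaulay property together with Reisner's criterion.'' Reisner's criterion says that \emph{every} link in a Cohen--Macaulay complex is Cohen--Macaulay; it treats all vertices symmetrically and therefore cannot single out the vertex you need. What the labeling in (ii) actually demands is a vertex $y_1$ of degree one (its only neighbour being its matched partner $x_1$), and, more globally, antisymmetry of the relation $p_i\leq p_j\Leftrightarrow\{x_i,y_j\}\in E(G)$, i.e.\ the exclusion of an induced $K_{2,2}$ on a set of the form $\{x_i,x_j\}\cup\{y_i,y_j\}$ (compare Theorem \ref{complete} of this paper, whose only nontrivial implication is exactly this point). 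Neither fact follows from ``all links are C--M''; one needs a genuine use of the hypothesis, e.g.\ passing to the Alexander dual via Eagon--Reiner and analyzing when the cover ideal can have a linear resolution, or a localization/reduced-homology computation showing that such a $K_{2,2}$ obstructs Cohen--Macaulayness. Your final paragraph concedes that this is the main obstacle but supplies no mechanism to overcome it, and the ``extremal rule'' that is supposed to make the inductive labeling satisfy (ii) and (iii) after reinserting $x_1,y_1$ is likewise left unspecified. As written, the forward direction is a plan rather than a proof.
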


\begin{Lemma}\label{unmixed}{\em(\cite[Theorem 1.1]{Vi})}
Let $G$ be  a bipartite graph. Then $G$ is unmixed if and only if there is a bipartition $V(G)=\{x_1,\dots, x_n\}\cup \{y_1,\dots, y_n\}$ such that
\begin{itemize}
\item[(i)] $\{x_i,y_i\}\in  E(G)$ for all $i\in [n]$;
\item[(ii)] If $\{x_i,y_j\},\{x_j,y_k\}\in  E(G)$  for distinct $i,j,k\in [n]$, then $\{x_i,y_k\}\in  E(G)$.
\end{itemize}
\end{Lemma}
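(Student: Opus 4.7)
The plan is to translate the Freiman condition on the cover ideal $H_{\mathcal{L}_G}$ into a structural condition on the distributive lattice $\mathcal{L}_G$ (equivalently, on the poset $P$ with $\mathcal{L}_G=\mathcal{J}(P)$), and then transfer that structure back through the bijection of \cite{HHO} to the graph $G$ using the labeling supplied by Lemma \ref{unmixed}.

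The first step is to set up the lattice pivot. Since $\mathcal{L}_G$ is a distributive sublattice of $\mathcal{L}_n$ containing $\emptyset$ and $P_n$, Birkhoff's theorem yields $\mathcal{L}_G=\mathcal{J}(P)$ for a finite poset $P$, and the fiber cone of the Hibi ideal $H_{\mathcal{L}_G}$ is the Hibi ring $k[\mathcal{L}_G]$. Using the equivalent characterizations of Freiman ideals in \cite[Theorem 2.3]{HHZ} (fiber cone of minimal multiplicity; defining ideal admitting a $2$-linear resolution), together with the Hibi-ring analysis recorded in \cite{HHZ,HZ1,HZ2}, I would pin down the precise shape of $P$ forcing $H_{\mathcal{L}_G}$ to be Freiman. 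The expected answer is that $P$ decomposes as an ordinal sum of antichains $A_1,\ldots,A_s$ of sizes $m_1,\ldots,m_s$ together with a chain-like remainder whose Hasse diagram matches the C-M labeling of Lemma \ref{CM}.

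The second step is to read this poset shape off from $G$. Starting from the canonical labeling of Lemma \ref{unmixed}, I define $p_i \preceq p_j$ iff $\{x_j,y_i\}\in E(G)$; part (ii) of that lemma makes $\preceq$ transitive, and a direct check yields $\mathcal{L}_G=\mathcal{J}([n],\preceq)$. An antichain of size $m$ in $([n],\preceq)$ then corresponds to $m$ indices whose associated $x$- and $y$-vertices span a complete bipartite subgraph $K_{m,m}$ of $G$, and conversely any such $K_{m,m}\subseteq G$ arises this way. The chain-like part of $P$, after collapsing each antichain block to a single representative, produces an induced subgraph on $V_n\setminus\bigcup V_{m_i-1}$ whose edges satisfy the three conditions of Lemma \ref{CM} up to the mild defect that characterizes the paper's notion of \emph{almost complete C-M bipartite graph}.

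Sufficiency then follows by reversing the translation: given $G$ in either of the two stated forms, I extract the poset decomposition, apply the Hibi-ring criterion, and conclude that $H_{\mathcal{L}_G}$ is Freiman. For necessity I would argue contrapositively: if $G$ falls outside both families, then $P$ contains either an antichain not supported by a $K_{m,m}$ clique in $G$ or a ``broken'' chain that spoils the C-M skeleton, and each such obstruction yields extra minimal generators in $H_{\mathcal{L}_G}^2$ beyond the bound $\ell(I)\mu(I)-{\ell(I)\choose 2}$, violating Freiman. The main obstacle, I expect, will be this necessity direction: carrying out a tight case analysis that rules out every unmixed bipartite graph lying outside the stated two families, and in particular reconciling the paper's ad hoc definition of ``almost complete C-M bipartite graph'' with the chain part of the poset decomposition and with the reduction from $V_{m_i}$ to $V_{m_i-1}$ (which reflects that only one representative from each antichain block survives into the skeleton).
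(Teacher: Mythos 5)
Your proposal does not address the statement you were asked to prove. The statement is Villarreal's purely graph-theoretic characterization of unmixed bipartite graphs: $G$ is unmixed if and only if there is a bipartition admitting a perfect matching $\{x_i,y_i\}$ together with the transitivity condition (ii). Nothing in this statement involves Freiman ideals, Hibi ideals, fiber cones, or the lattice $\mathcal{L}_G$; yet your entire argument is organized around ``translating the Freiman condition on the cover ideal into a structural condition on the distributive lattice.'' What you have sketched is a plan for the paper's main classification theorem (the result stated in the introduction and as Corollary \ref{good}), not a proof of Lemma \ref{unmixed}. In the paper this lemma carries no proof at all: it is quoted verbatim from \cite[Theorem 1.1]{Vi} and used as an input to the later arguments (for instance in Theorem \ref{complete} and Theorem \ref{cover}), exactly as you yourself use it as an input in your second step.

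If you intended to prove the lemma itself, the argument would have to be elementary graph theory: for the forward direction, show that unmixedness forces a perfect matching (e.g.\ via K\"onig's theorem, since in an unmixed bipartite graph with no isolated vertices the two sides of the bipartition are both minimal vertex covers, hence have equal size, and a matching saturating one side exists), and then verify condition (ii) by exhibiting, for a failing triple $i,j,k$, two minimal vertex covers of different cardinalities; for the converse, show that conditions (i) and (ii) force every minimal vertex cover to pick exactly one of $x_i,y_i$ for each $i$, so all have cardinality $n$. None of this machinery appears in your proposal, so as a proof of the stated lemma it has a complete gap rather than a repairable one.
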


\begin{Definition} \label{exam}
A  bipartite  graph $G$ is called   complete C-M  if there exists a bipartition of $V(G)=\{x_1,\dots, x_n\}\cup \{y_1,\dots, y_n\}$ such that,  for any $i,j\in [n]$, $\{x_i,y_j\}\in  E(G)$ if and only if  $i\leq j$.
\end{Definition}

Let  $(P_n,\leq)$ be a finite  poset with $P_n=\{p_1,\ldots,p_n\}$. We write $G(P_n)$ for the bipartite graph on the
 set $V_n=W\cup W'$, where $W=\{x_1,\dots, x_n\}$ and $W'=\{y_1,\dots, y_n\}$, whose edges are those $\{x_i,y_j\}$ such that $p_i\leq p_j$.
A bipartite graph $G$ on $V_n=W\cup W'$  is said to come from a
poset, if there exists a finite poset $P_n$ on $\{p_1,\ldots,p_n\}$ such that $p_i\leq p_j$ implies
$\{x_i,y_j\}\in E(G)$, and after relabeling of the vertices of $G$ one has $G =G(P_n)$. Herzog and Hibi in \cite{HH2}
proved that a bipartite graph $G$ is C-M if and only if $G$ comes
from a poset.

\begin{Remark}
A  complete C-M bipartite graph $G$ is connected and a C-M bipartite graph, hence it is also an unmixed bipartite graph. It follows that $G$  comes from a poset.
\end{Remark}

A poset $P$ is called a {\em chain} if it is totally ordered, that is
if any two elements of $P$ are comparable.

\begin{Theorem}\label{C-Mcomplete}
Let $G$ be an unmixed  bipartite graph, which comes from a poset $P_n$. Then $G$ is a  complete  C-M  bipartite graph if and only if $P_n$ is a chain.
\end{Theorem}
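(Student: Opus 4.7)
The plan is to handle the two implications separately, using a direct construction for the reverse direction and an edge-count for the forward direction.

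For the implication ``$P_n$ a chain $\Rightarrow$ $G$ is complete C-M'', I would take the unique linear extension $\pi$ of the chain $P_n$, so that $p_{\pi(1)} < p_{\pi(2)} < \cdots < p_{\pi(n)}$. Renaming $x_{\pi(k)}$ as $x'_k$ and $y_{\pi(k)}$ as $y'_k$, the edge set $\{\{x_i, y_j\} : p_i \leq p_j\}$ of $G(P_n)$ becomes $\{\{x'_k, y'_\ell\} : k \leq \ell\}$ in the new labeling. Since $G$ coming from $P_n$ means $G = G(P_n)$ up to relabeling, this exhibits $G$ as a complete C-M bipartite graph in the sense of Definition~\ref{exam}.

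For the implication ``$G$ is complete C-M $\Rightarrow$ $P_n$ is a chain'', I would count edges. Under the labeling realizing complete C-M, the edges of $G$ are exactly $\{x_i, y_j\}$ with $i \leq j$, so $|E(G)| = \binom{n+1}{2}$. On the other hand, since $G = G(P_n)$ up to relabeling,
\[
|E(G)| = |\{(i,j) \in [n]^2 : p_i \leq p_j \text{ in } P_n\}| = n + c(P_n),
\]
where $c(P_n)$ counts the unordered pairs of distinct elements of $P_n$ that are comparable (the reflexive pairs $i=j$ contribute $n$, while each comparable unordered pair $\{i,j\}$ with $i\neq j$ contributes exactly once, via the unique orientation $p_i < p_j$ or $p_j < p_i$). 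Since $c(P_n) \leq \binom{n}{2}$ with equality iff every two distinct elements of $P_n$ are comparable, the identity $n + c(P_n) = \binom{n+1}{2}$ forces $P_n$ to be a chain.

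The only delicate point is that the labeling realizing $G$ as complete C-M need not coincide with the labeling under which $G = G(P_n)$; however, $|E(G)|$ is invariant under relabeling, so the counting step is unaffected. No machinery beyond the definitions of $G(P_n)$ and of complete C-M is required.
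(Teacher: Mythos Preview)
Your proof is correct. The reverse implication (chain $\Rightarrow$ complete C-M) agrees with the paper: both of you relabel via the linear extension of $P_n$ and read off Definition~\ref{exam} directly.

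For the forward implication you take a genuinely different route. The paper fixes a linear extension labeling of $P_n$ (so that $p_i\le p_j$ implies $i\le j$) and then argues in one line that, under this labeling, $G$ satisfies Definition~\ref{exam} iff $\{x_i,y_j\}\in E(G)$ for all $i\le j$ iff $p_i\le p_j$ for all $i\le j$. This is short, but it tacitly identifies the complete C-M labeling with the chosen linear extension labeling; strictly speaking Definition~\ref{exam} is existential, so one must know that no \emph{other} relabeling could witness complete C-M when the chosen one fails. Your edge-count argument sidesteps this issue entirely: $|E(G)|=\binom{n+1}{2}$ is a labeling-invariant statement, and matching it against $|E(G(P_n))|=n+c(P_n)$ forces $c(P_n)=\binom{n}{2}$, hence $P_n$ is a chain. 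The paper's approach is more direct once one accepts (or verifies, e.g.\ via degree sequence) that the complete C-M labeling is essentially unique; your approach trades that observation for an elementary count and is arguably cleaner as written.
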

\begin{proof} Let $(P_n, \leq)$ be a poset on $P_n=\{p_1,\ldots,p_n\}$ such that $p_i\leq p_j$
implies $i\leq j$, and $G=G(P_n)$.   For any $i,j\in [n]$ with $i\leq j$, we obtain  by Definition  \ref{exam} that $G$ is a complete  C-M  bipartite graph if and only if  $\{x_i,y_j\}\in E(G)$ if and only if  $p_i\leq p_j$, as  desired.
\end{proof}

\begin{Theorem}\label{zhao}
Let $G$ be a C-M bipartite graph on the  vertex set $V(G)=\{x_1,\dots, x_n\}\\
\cup \{y_1,\dots, y_n\}$, and   $H_{\mathcal{L}_G}$  the Hibi ideal of $G$. Then $H_{\mathcal{L}_G}$ is Freiman if and only if  the induced subgraph of $G$ on  $V(G)\setminus \{x_{i},y_{i}\}$ is a  complete C-M bipartite graph  for some $i\in[n]$.
\end{Theorem}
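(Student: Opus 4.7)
The plan is to reduce this theorem to the classification of Freiman Hibi ideals via the poset description of C-M bipartite graphs. Since $G$ is C-M bipartite, the discussion following Lemma~\ref{CM} shows that, after relabeling, $G = G(P_n)$ for a finite poset $P_n = \{p_1,\ldots,p_n\}$, and the minimal vertex covers of $G$ are in bijection with the poset ideals of $P_n$. Consequently $\mathcal{L}_G = \mathcal{J}(P_n)$, and $H_{\mathcal{L}_G}$ is the Hibi ideal of this distributive lattice. By Theorem~\ref{C-Mcomplete}, the induced subgraph of $G$ on $V(G)\setminus\{x_i,y_i\}$ is complete C-M precisely when $P_n\setminus\{p_i\}$ is a chain, so the statement becomes: $H_{\mathcal{J}(P_n)}$ is Freiman if and only if some $p_i\in P_n$ has complement a chain.

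For the sufficiency direction, I would fix $i$ with $P_n\setminus\{p_i\}$ a chain $q_1<\cdots<q_{n-1}$, and write down $\mathcal{J}(P_n)$ explicitly: every poset ideal is of the form $\{q_1,\ldots,q_k\}$, possibly with $p_i$ adjoined, subject to the comparabilities of $p_i$ with the $q_j$. This yields a closed-form count for $\mu(H_{\mathcal{J}(P_n)})=|\mathcal{J}(P_n)|$. Together with $\ell(H_{\mathcal{J}(P_n)})=\dim k[\mathcal{J}(P_n)]=n+1$, one can either verify the Freiman identity $\mu(I^2)=\ell\mu-\binom{\ell}{2}$ by a direct generator count in $\mathcal{J}(P_n)^2$, or equivalently check that the Hibi ring $k[\mathcal{J}(P_n)]$ has minimal multiplicity, which by \cite[Theorem~2.3]{HHZ} is exactly what Freiman means here. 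The almost-chain structure of $P_n$ makes $\mathcal{J}(P_n)$ a planar distributive lattice, for which minimal multiplicity of the Hibi ring is easy to verify by exhibiting a linear system of parameters.

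For the necessity direction, I would argue contrapositively. If no single-element removal from $P_n$ yields a chain, then $P_n$ contains either a three-element antichain, or two disjoint two-element antichains. In each case one can locate inside $\mathcal{J}(P_n)$ a configuration of incomparable lattice elements that produces a minimal generator of degree $\geq 3$ in the defining toric ideal of the Hibi ring (via an $S$-pair of two Hibi relations that does not reduce to a binomial of degree two). Since the Hibi ring is always Cohen-Macaulay by Hibi's theorem, this failure of a $2$-linear resolution violates the equivalent Freiman criterion from \cite[Theorem~2.3]{HHZ}.

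The main obstacle is precisely this necessity step: one must pin down the exact poset obstruction and convert it into an explicit syzygy or into a measurable discrepancy in the count $\mu(H^2)-\ell\mu+\binom{\ell}{2}$. The cleanest path is to exhibit, in the non-almost-chain case, two Hibi relations whose $S$-pair reduces to a cubic minimal binomial of the toric ideal, ruling out $2$-linearity of the fiber cone's defining ideal and hence Freiman-ness. Translating this analysis back through Theorem~\ref{C-Mcomplete} then gives the graph-theoretic conclusion.
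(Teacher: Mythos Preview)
Your reduction to the poset $P_n$ and the translation via Theorem~\ref{C-Mcomplete} match the paper exactly; the paper then simply invokes \cite[Theorem~4.4]{HZ1}, which is precisely the statement ``$H_{\mathcal{J}(P_n)}$ is Freiman $\iff$ $P_n\setminus\{p_i\}$ is a chain for some $i$''. So the paper's proof is a one-line citation together with the observation that the induced subgraph on $V(G)\setminus\{x_i,y_i\}$ is $G(P_n\setminus\{p_i\})$, whereas you propose to re-prove that cited theorem from scratch.

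Your sufficiency outline is reasonable, and your dichotomy (no one-point deletion gives a chain $\Rightarrow$ a $3$-antichain or two disjoint $2$-antichains) is correct. However, the necessity argument has a genuine error: the defining toric ideal of a Hibi ring is \emph{always} generated by the quadratic Hibi relations (this is Hibi's theorem), so there is never a minimal generator of degree~$\geq 3$, and the $S$-pair you produce will reduce to zero modulo the quadrics. What fails in the non-almost-chain case is not quadratic generation but $2$-\emph{linearity of the resolution}: you must exhibit a non-linear first syzygy among the Hibi relations (equivalently, show $\reg k[\mathcal{J}(P_n)]\geq 2$), or else carry out the direct count of $\mu(H^2)-\ell\mu+\binom{\ell}{2}$ that you mention as an alternative. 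Either of these is substantially more work than your sketch indicates, which is presumably why the paper defers to \cite{HZ1}.
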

\begin{proof} Let $(P_n, \leq)$ be a poset on $P_n=\{p_1,\ldots,p_n\}$  and $G=G(P_n)$ comes from the poset $P_n$.
It follows from \cite[Theorem 4.4]{HZ1} that  $H_{\mathcal{L}_G}$ is Freiman if and only if there exist some $i\in [n]$ such that the subposet $P_{n}\setminus \{p_{i}\}$ is a chain. Let $H$ be the induced subgraph of $G$ on  $V(G)\setminus \{x_{i},y_{i}\}$. Then
$H$ comes from the subposet $P_n\setminus \{p_{i}\}$. Indeed, let $p_k,p_\ell\in P_n\setminus \{p_{i}\}$ such that $p_k\leq p_\ell$, it is obvious that $k,\ell\notin \{i\}$  and  $p_k\leq p_\ell$ in $P_n$. It implies that  $\{x_k,y_\ell\}\in E(G)$ and $k,\ell\notin \{i\}$. Hence  $\{x_k,y_\ell\}\in E(H)$.
The desired results from Theorem \ref{C-Mcomplete}.
\end{proof}

\medskip
A typical example of a Freiman  cover ideal of an unmixed bipartite graph is shown in Figure $1$.
\medskip
\begin{center}
\setlength{\unitlength}{1.5mm}
\begin{picture}(120,35)
\linethickness{1pt}
\thicklines
\multiput(5,10)(10,0){4}{\circle*{1}}
\multiput(5,30)(10,0){4}{\circle*{1}}
\multiput(5,10)(10,0){4}{\line(0,1){20}}
\multiput(5,30)(10,0){3}{\line(1,-2){10}}
\multiput(5,30)(10,0){2}{\line(1,-1){20}}
\put(5,30){\line(3,-2){30}}
\put(15,10){\line(1,2){10}}
\put(4,32){$x_{1}$}
\put(14,32){$x_{2}$}
\put(24,32){$x_{4}$}
\put(34,32){$x_{3}$}
\put(4,6){$y_{1}$}
\put(14,6){$y_{2}$}
\put(24,6){$y_{4}$}
\put(34,6){$y_{3}$}
\put(14,-2){$Figure\ 1$}

\multiput(65,10)(10,0){4}{\circle*{1}}
\multiput(65,30)(10,0){4}{\circle*{1}}
\multiput(65,10)(10,0){4}{\line(0,1){20}}
\multiput(65,30)(20,0){2}{\line(1,-2){10}}
\put(64,32){$x_{1}$}
\put(74,32){$x_{2}$}
\put(84,32){$x_{3}$}
\put(94,32){$x_{4}$}
\put(64,6){$y_{1}$}
\put(74,6){$y_{2}$}
\put(84,6){$y_{3}$}
\put(94,6){$y_{4}$}
\put(69,1){$G_1$}
\put(89,1){$G_2$}
\put(74,-3){$Figure\  4$}
\end{picture}
\end{center}

\vspace{0.5cm}
\medskip

\medskip

\section{unmixed bipartite graphs}

A  bipartite graph is called {\em complete}  if its two vertices are adjacent if and only if they are in different partite sets. The complete
bipartite graph with partite sets of size $m$ and $n$ is denoted $K_{m,n}$.

\begin{Theorem}\label{complete}
Let $G$ be  an unmixed bipartite graph on the vertex set $V_n=\{x_1,\dots, x_n\}\cup \{y_1,\dots, y_n\}$.
Then the following conditions are equivalent:
\begin{itemize}
\item[(i)] $G$ is C-M;
\item[(ii)] Any complete bipartite graph $K_{m,m}$ with the  vertex set $\{x_{i_{1}},\ldots,x_{i_{m}}\}\cup\{y_{i_{1}},\ldots,\\
y_{i_{m}}\}$ cannot be an induced  subgraph of $G$, where $m\geq 2$ and $\{i_1,\ldots,i_m\}\subset [n]$;
\item[(3)] Any complete bipartite graph $K_{2,2}$ with the  vertex set $\{x_{i},x_{j}\}\cup\{y_{i},y_{j}\}$ cannot be an induced  subgraph of $G$, where $\{i,j\}\subset [n]$.
\end{itemize}
\end{Theorem}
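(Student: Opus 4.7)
The plan is to organize the argument around the easy equivalence (ii) $\iff$ (iii), plus the main content (i) $\iff$ (iii). First, (ii) $\Rightarrow$ (iii) is immediate by specializing to $m=2$, while (iii) $\Rightarrow$ (ii) follows because an induced $K_{m,m}$ on $\{x_{i_1},\ldots,x_{i_m}\}\cup\{y_{i_1},\ldots,y_{i_m}\}$ restricts, on any two of the indices $i_j,i_k$, to an induced $K_{2,2}$ on $\{x_{i_j},x_{i_k}\}\cup\{y_{i_j},y_{i_k}\}$, since an induced subgraph of an induced subgraph is still induced.

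For (i) $\Rightarrow$ (iii), I would invoke Lemma \ref{CM}: after a relabeling of $1,\ldots,n$ that permutes the matched pairs $(x_i,y_i)$ as whole units, $G$ satisfies $\{x_i,y_i\}\in E(G)$ together with the implication $\{x_i,y_j\}\in E(G)\Rightarrow i\leq j$. The property in (iii) is invariant under such a relabeling, so it suffices to verify it in the new one. If an induced $K_{2,2}$ sat on $\{x_i,x_j,y_i,y_j\}$ with $i<j$, the edge $\{x_j,y_i\}$ would force $j\leq i$, a contradiction.

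For (iii) $\Rightarrow$ (i), I would fix the labeling supplied by Lemma \ref{unmixed} and define a relation on $P_n=\{p_1,\ldots,p_n\}$ by $p_i\leq p_j \iff \{x_i,y_j\}\in E(G)$. Reflexivity comes from Lemma \ref{unmixed}(i); transitivity from Lemma \ref{unmixed}(ii), the cases in which two of the indices coincide being tautological; and antisymmetry from (iii), since $p_i\leq p_j$ together with $p_j\leq p_i$ and $i\neq j$ would, combined with the diagonal edges $\{x_i,y_i\}$ and $\{x_j,y_j\}$, produce a forbidden induced $K_{2,2}$. Thus $(P_n,\leq)$ is a poset and $G=G(P_n)$ directly, so $G$ comes from a poset and is therefore Cohen--Macaulay by the Herzog--Hibi characterization recalled in Section 1.

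The main subtlety I expect is the bookkeeping in the direction (i) $\Rightarrow$ (iii): one must check that the relabeling bringing $G$ into the normal form of Lemma \ref{CM} respects the matching structure $(x_i,y_i)$ and therefore preserves the shape of the forbidden $K_{2,2}$ in (iii). Once that invariance is noted, both nontrivial directions reduce to short contradictions, and the whole proof is essentially a translation between the combinatorics of posets and the edge set of $G$.
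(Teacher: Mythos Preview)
Your proposal is correct and follows essentially the same route as the paper: the paper proves the cycle $(1)\Rightarrow(2)\Rightarrow(3)\Rightarrow(1)$, where $(1)\Rightarrow(2)$ is dispatched in one line via Lemma~\ref{CM}(ii), $(2)\Rightarrow(3)$ is declared trivial, and $(3)\Rightarrow(1)$ is exactly your poset construction---define $p_i\le p_j\iff\{x_i,y_j\}\in E(G)$, use Lemma~\ref{unmixed} for reflexivity and transitivity, use the forbidden $K_{2,2}$ for antisymmetry, and conclude that $G$ comes from a poset. Your version is slightly more explicit in two places: you spell out why an induced $K_{m,m}$ yields an induced $K_{2,2}$, and you flag the relabeling issue in $(i)\Rightarrow(iii)$ that the paper passes over silently (it is indeed harmless, since a C-M bipartite graph has a unique perfect matching, so the Lemma~\ref{CM} labeling differs from the given one only by a simultaneous permutation of the matched pairs).
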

\begin{proof}
$(1)\Rightarrow(2)$ follows from (ii) of Lemma \ref{CM}, and   $(2)\Rightarrow(3)$ is trivial.

$(3)\Rightarrow(1)$. We only need to find a finite poset $P_n$ such that $G=G(P_n)$ comes from this poset. Let $P_n=\{p_{1},\ldots,p_{n}\}$ and $\leq$
 denote the binary relation on $P_n$ defined by setting $p_{i}\leq p_{j}$ if and only if $\{x_{i},y_{j}\}\in E(G)$.   Lemma \ref{unmixed} implies that $p_{i}\leq p_{i}$ for any $i\in [n]$, and  if $p_{i}\leq p_{j}$ and $p_{j}\leq p_{k}$ for any
$i, j, k\in [n]$, then $p_{i}\leq p_{k}$.
Let $p_{i}\leq p_{j}$ and  $p_{j}\leq p_{i}$ for any $i,j\in [n]$, then $i=j$. Otherwise,
the complete bipartite graph $K_{2,2}$ on  the vertex set $\{x_{i},x_{j}\}\cup\{y_{i},y_{j}\}$ is an induced  subgraph of $G$,  a contradiction.
Thus $\leq$ is a partial order on $P_n$ and
$G=G(P_n)$ comes from the poset $P_n$ by \cite[Lemma 3.1]{HH2}. Hence $G$ is C-M, as desired.
\end{proof}

\begin{Theorem}\label{cover}
Let $G$ be  an unmixed bipartite graph on the vertex set $V_n=\{x_1,\dots, x_n\}\\
\cup \{y_1,\dots, y_n\}$.
Suppose that $G$ has an induced subgraph $K_{m,m}$ {\em($m\geq 2$)}, with the vertex set  $V_m=\{x_{i_{1}},\ldots,x_{i_{m}}\}\cup\{y_{i_{1}},\ldots,y_{i_{m}}\}$. Let $H$ be the subgraph of $G$  induced by the subset $V_n\setminus V_{m-1}$, where $V_{m-1}=\{x_{i_{1}},\ldots,x_{i_{m-1}}\}\cup\{y_{i_{1}},\ldots,y_{i_{m-1}}\}$. Then there exists a one-to-one correspondence between the sets $\mathcal{M}(G)$, respectively $\mathcal{M}(H)$, of minimal vertex covers of $G$,
respectively $H$. More precisely, for all subsets $C\in V_n\setminus V_{m-1}$, we have
\begin{itemize}
\item[(i)] If $x_{i_m}\in  C$, then $C\in  \mathcal{M}(H)\Leftrightarrow C\cup \{x_{i_{1}},\ldots,x_{i_{m}}\}\in  \mathcal{M}(G)$;
\item[(ii)] If $x_{i_m}\notin  C$, then $C\in  \mathcal{M}(H)\Leftrightarrow C\cup \{y_{i_{1}},\ldots,y_{i_{m}}\}\in  \mathcal{M}(G)$.
\end{itemize}
\end{Theorem}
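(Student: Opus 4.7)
The plan is to define $\Phi : \mathcal{M}(H) \to \mathcal{M}(G)$ by $\Phi(C) = C \cup \{x_{i_1},\ldots,x_{i_m}\}$ if $x_{i_m}\in C$ and $\Phi(C) = C \cup \{y_{i_1},\ldots,y_{i_m}\}$ otherwise, and to verify that $\Phi$ is a bijection. The crucial structural step is the following observation: because $K_{m,m}$ on $V_m$ is induced in $G$, iterated application of Lemma \ref{unmixed}(ii) forces $x_{i_1},\ldots,x_{i_m}$ to share a common neighborhood $N_y \subseteq \{y_k : k\notin\{i_1,\ldots,i_m\}\}$ and, symmetrically, $y_{i_1},\ldots,y_{i_m}$ to share a common neighborhood $N_x \subseteq \{x_k : k\notin\{i_1,\ldots,i_m\}\}$. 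A second observation is that $H$ inherits conditions (i) and (ii) of Lemma \ref{unmixed} on the index set $\{i_m\}\cup([n]\setminus\{i_1,\ldots,i_m\})$, so $H$ is itself unmixed bipartite and each $C\in\mathcal{M}(H)$ contains exactly one of $\{x_j,y_j\}$ for each such index $j$.

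For the forward direction of (i), assume $x_{i_m}\in C\in\mathcal{M}(H)$, so that $y_{i_m}\notin C$. Then for every $x_k \in N_x$ the $H$-edge $\{x_k, y_{i_m}\}$ forces $x_k\in C$. I would then show $C' := C\cup\{x_{i_1},\ldots,x_{i_m}\}$ is a minimal vertex cover of $G$ by partitioning the edges of $G$ into (a) edges of $K_{m,m}$, (b) edges $\{x_{i_a},y_k\}$ with $a\in[m]$ and $k\notin\{i_1,\ldots,i_m\}$, (c) edges $\{x_k,y_{i_a}\}$ with $a\in[m-1]$ and $x_k\in N_x$, and (d) edges of $H$. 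Types (a) and (b) are covered by the added $x_{i_a}$'s, (d) by $C$, and (c) by the inclusion $N_x\subseteq C$ just established. Minimality is then routine: each newly added $x_{i_a}$ is the only cover of $\{x_{i_a},y_{i_a}\}$ in $C'$, and any redundancy of $v\in C$ in $C'$ would translate to a redundancy in $C$ because a witness edge $\{v,w\}$ of $G$ with $w\notin C'$ must satisfy $w\in V(H)$. Case (ii) is handled symmetrically using $N_y$.

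For the converse direction, I would first note that any $C'\in\mathcal{M}(G)$ must contain either all of $x_{i_1},\ldots,x_{i_m}$ or all of $y_{i_1},\ldots,y_{i_m}$: the edges of $K_{m,m}$ together with unmixedness of $G$ force this dichotomy on the indices in $[m]$. Consequently $C'$ has one of the two forms in (i)--(ii) with $C := C'\cap V(H)$, and it remains to show $C\in\mathcal{M}(H)$. Covering is immediate; minimality of $C$ in $H$ follows by transporting the minimality of $C'$ back, using that the potentially problematic witnesses $w = y_{i_a}$ with $a<m$ are ruled out by the common-neighborhood structure together with $y_{i_m}\notin C$. The main obstacle throughout is the analysis of the edges $\{x_k,y_{i_a}\}$ and $\{x_{i_a},y_k\}$ that lie in $G$ but not in $H$; the common neighborhoods $N_x$ and $N_y$ are precisely the tool that tames them.
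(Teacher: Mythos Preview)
Your argument is correct and follows essentially the same route as the paper's proof: both hinge on the unmixed transitivity condition of Lemma~\ref{unmixed}(ii) to show that any edge $\{x_k,y_{i_a}\}$ (respectively $\{x_{i_a},y_k\}$) with $a<m$ forces the corresponding edge $\{x_k,y_{i_m}\}$ (respectively $\{x_{i_m},y_k\}$) in $H$, which is exactly your common-neighborhood observation $N_x,N_y$ stated in aggregate form. The only noteworthy difference is in how minimality is verified: you argue it directly via redundancy of individual vertices, whereas the paper dispatches it by a cardinality count ($|C|=n-m+1$ and $|B|=n$), tacitly using that both $G$ and $H$ are unmixed---a fact you make explicit but the paper leaves implicit.
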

\begin{proof}
  $(\Rightarrow)$ Let  $x_{i_m}\in  C$, put $B=C\cup \{x_{i_{1}},\ldots,x_{i_{m}}\}$. We show that $B\in  \mathcal{M}(G)$.
 We distinguish into the following three cases:

(1) For any $\{x_k,y_\ell\}\in E(H)$, we have  $C\cap \{x_k,y_\ell\}\neq \emptyset$ since $C\in  \mathcal{M}(H)$. In particular, $B\cap \{x_k,y_\ell\}\neq \emptyset$.

(2) For any $\{x_{i_k},y_\ell\}\in E(G)$ with $k\in [m]$, we have $x_{i_k}\in B\cap \{x_{i_k},y_\ell\}$.

(3) For any  $\{x_{i_k},y_{i_\ell}\} \in E(G)$ with $\ell \in [m-1]$ and $k\notin [m]$. By Lemma \ref{unmixed} (ii), we obtain  that $\{x_{i_k},y_{i_m}\} \in E(G)$ since $\{x_{i_\ell},y_{i_m}\} \in E(G)$. Hence $\{x_{i_k},y_{i_m}\} \in E(H)$, it implies $C\cap \{x_{i_k},y_{i_m}\}\neq \emptyset$.

Note that $|C\cap \{x_{i_m},y_{i_m}\}|=1$, we have $y_{i_m}\notin C$ because of $x_{i_m}\in C$. It follows that $x_{i_k}\in C$. Thus
 $B\cap \{x_{i_k},y_{i_m}\}=\{x_{i_k}\}$ and $B\in  \mathcal{M}(G)$.

 If $x_{i_m}\notin  C$, then  $y_{i_m}\in  C$. Put $B=C\cup \{y_{i_{1}},\ldots,y_{i_{m}}\}$. By similar arguments as above, we can show that $B\in  \mathcal{M}(G)$.

$(\Leftarrow)$ Let $B=C\cup \{x_{i_{1}},\ldots,x_{i_{m}}\}$ and $B\in  \mathcal{M}(G)$. Then $|B\cap \{x_{i_{\ell}},y_{i_{\ell}}\}|=1$ for any $\ell\in [m-1]$. This implies that either $B\cap \{x_{i_{\ell}},y_{i_{\ell}}\}=\{x_{i_{\ell}}\}$ or $B\cap \{x_{i_{\ell}},y_{i_{\ell}}\}=\{y_{i_{\ell}}\}$.
If $B\cap \{x_{i_{\ell}},y_{i_{\ell}}\}=\{x_{i_{\ell}}\}$, then $y_{i_{\ell}}\notin B$, it means that $B\cap \{x_{i_{m}},y_{i_{\ell}}\}=\{x_{i_{m}}\}$ because of $\{x_{i_{m}},y_{i_{\ell}}\}\in E(G)$. Set $C=B\setminus \{x_{i_{1}},\ldots,x_{i_{m-1}}\}$. Then  $B\cap \{x_k ,y_l\}\neq \emptyset$ for any
$\{x_k ,y_l\} \in E(H)$, and  $B\cap \{x_k ,y_l\}\subset C$. Hence  $C$ is a vertex cover
of $H$. Since $|C|=|B|-(m-1)=n-m+1$, we get $C\in  \mathcal{M}(H)$. If $B\cap \{x_{i_{\ell}},y_{i_{\ell}}\}=\{y_{i_{\ell}}\}$,
then $x_{i_{\ell}}\notin B$ and $C=B\setminus \{y_{i_{1}},\ldots,y_{i_{m-1}}\}\in  \mathcal{M}(H)$.
\end{proof}

\medskip
 For every monomial $z^{a_1}\cdots z^{a_n}\in R$, we write $Z^{\mathbf a}$ for $z^{a_1}\cdots z^{a_n}$, where $\mathbf{a}=(a_1,\ldots,a_n)$ with  $a_i\geq 0$ for any $i\in [n]$. We call $\mathbf{a}$ to be the {\em exponential vector} of $z^{\mathbf a}$.
 Let $\mathcal{G}(I)$ denote the
unique minimal set of monomial generators of a monomial ideal.

\begin{Lemma}\label{rank}{\em(\cite[Lemma 10.3.19]{BH})} Let $I\subset R$ be a monomial ideal generated in a single degree  with  $\mathcal{G}(I)=\{z^{\mathbf {a_1}},\ldots,z^{\mathbf {a_m}}\}$, and let $A$ be the $m\times n$ matrix
whose rows are the vectors $\{\mathbf{a_1},\ldots,\mathbf{a_m}\}$. Then $\ell(I)=\text{rank}\,(A)$.
 \end{Lemma}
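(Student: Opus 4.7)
The plan is to identify the fiber cone $F(I) := R[It]/\mm R[It]$ with an explicit toric subalgebra of $R$, and then invoke the standard Krull--dimension formula for affine semigroup rings; since $\ell(I) = \dim F(I)$ by definition, this will yield the claim.

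First, I would realise the Rees algebra as the subring $R[It] = R[z^{\mathbf{a_1}}t, \ldots, z^{\mathbf{a_m}}t] \subset R[t]$. The image of each $z^{\mathbf{a_i}}t$ under the quotient map to $F(I)$ generates $F(I)$ as a $k$-algebra, so there is a surjection $k[y_1,\ldots,y_m] \twoheadrightarrow F(I)$ sending $y_i$ to the class of $z^{\mathbf{a_i}}t$. In parallel, substitution $y_i \mapsto z^{\mathbf{a_i}}$ gives a surjection $k[y_1,\ldots,y_m] \twoheadrightarrow T$ onto the toric subring $T := k[z^{\mathbf{a_1}},\ldots,z^{\mathbf{a_m}}] \subset R$. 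I would verify that both maps have the same kernel: the kernel of the second map is generated by binomials $y^{\alpha} - y^{\beta}$ with $\sum \alpha_i \mathbf{a_i} = \sum \beta_i \mathbf{a_i}$, and since each $\mathbf{a_i}$ has coordinate sum $d$, any such relation forces $\sum \alpha_i = \sum \beta_i$, i.e. the binomial is homogeneous in the $y$-grading. This homogeneity is what lets the $t$-variable be harmlessly dropped and gives a canonical isomorphism $F(I) \cong T$.

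Second, $T$ is an affine semigroup ring on the submonoid $\NN\mathbf{a_1}+\cdots+\NN\mathbf{a_m}$ of $\NN^n$. A standard fact (see \cite[Thm.~6.1.4]{BH} and the surrounding discussion) gives
\[
\dim T \;=\; \trdeg_k(T) \;=\; \rank_{\ZZ}\bigl(\ZZ\mathbf{a_1}+\cdots+\ZZ\mathbf{a_m}\bigr) \;=\; \dim_{\QQ}\QQ\{\mathbf{a_1},\ldots,\mathbf{a_m}\},
\]
and the last quantity is exactly $\rank(A)$. Combining the two steps, $\ell(I) = \dim F(I) = \dim T = \rank(A)$.

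The main obstacle is the first step: matching the kernels of the two presentations and hence securing the isomorphism $F(I) \cong T$. This is precisely where the hypothesis that $I$ is equigenerated is indispensable: without a common degree $d$ of the $z^{\mathbf{a_i}}$, binomial relations among the $z^{\mathbf{a_i}}$ need not be $y$-homogeneous, and one could lose (or gain) relations when passing between the two surjections. The rest is a routine application of toric-ring dimension theory.
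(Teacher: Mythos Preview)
Your argument is correct and is essentially the standard textbook proof: identify the fiber cone with the monomial subalgebra $k[z^{\mathbf a_1},\dots,z^{\mathbf a_m}]$ (using equigeneration to force all binomial relations to be homogeneous in the $y$-grading), and then read off the Krull dimension of an affine semigroup ring as the rank of the lattice it spans. There is nothing to compare against, however: in the paper this lemma carries no proof at all and is simply quoted from the literature (the citation label points to Bruns--Herzog, though the numbering suggests the intended source may actually be a different monograph such as Herzog--Hibi or Villarreal). So you have supplied a full, self-contained justification where the paper is content to cite.
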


Now, we are ready to prove  major result of this section.
\begin{Theorem}\label{main}
Let $G$ be  a connected  unmixed bipartite graph on the vertex set $V_n=\{x_1,\dots, x_n\}\cup \{y_1,\dots, y_n\}$.
Suppose that $G$ has an induced subgraph $K_{m,m}$  {\em($m\geq 2$)} with the vertex set  $V_m=\{x_{i_{1}},\ldots,x_{i_{m}}\}\cup\{y_{i_{1}},\ldots,y_{i_{m}}\}$. Let $H$ be the induced subgraph of $G$ on the subset $V_n\setminus V_{m-1}$, where $V_{m-1}=\{x_{i_{1}},\ldots,x_{i_{m-1}}\}\cup\{y_{i_{1}},\ldots,y_{i_{m-1}}\}$. Let $H_{\mathcal{L}_G}$ {\em (resp. $H_{\mathcal{L}_H}$)} be the Hibi ideal of the lattice $\mathcal{L}_G$ {\em (resp. $\mathcal{L}_H$)}. Then $H_{\mathcal{L}_G}$ is Freiman if and only if $H_{\mathcal{L}_H}$ is Freiman.
\end{Theorem}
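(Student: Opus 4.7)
The plan is to use Theorem~\ref{cover} to reduce the Freiman condition for $H_{\mathcal{L}_G}$ to that for $H_{\mathcal{L}_H}$ by showing that all three quantities in the Freiman equality $\mu(I^2)=\ell(I)\mu(I)-\binom{\ell(I)}{2}$ coincide for the two ideals. Since $H_{\mathcal{L}_G}$ is the cover ideal of $G$, its minimal generators are the squarefree monomials $u_C=\prod_{v\in C}v$ indexed by $C\in\mathcal{M}(G)$, and the bijection $\phi\colon\mathcal{M}(H)\to\mathcal{M}(G)$ of Theorem~\ref{cover} reads on monomials as
\[
u_{\phi(C)}=\begin{cases} u_C\cdot x_{i_1}\cdots x_{i_{m-1}} & \text{if } x_{i_m}\in C,\\ u_C\cdot y_{i_1}\cdots y_{i_{m-1}} & \text{if } y_{i_m}\in C,\end{cases}
\]
which immediately gives $\mu(H_{\mathcal{L}_G})=\mu(H_{\mathcal{L}_H})$.

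To compare analytic spreads I would apply Lemma~\ref{rank} to the exponent matrices $A_G$ and $A_H$. Every minimal vertex cover of $G$ meets $V_m$ in either $\{x_{i_1},\ldots,x_{i_m}\}$ or $\{y_{i_1},\ldots,y_{i_m}\}$, because $K_{m,m}$ is induced on $V_m$ and minimality forbids picking vertices from both sides of this complete bipartite piece. Consequently the columns of $A_G$ indexed by $x_{i_1},\ldots,x_{i_m}$ are all equal, as are those indexed by $y_{i_1},\ldots,y_{i_m}$, and for every other vertex the column of $A_G$ agrees with the corresponding column of $A_H$ under $\phi$. Thus $A_G$ arises from $A_H$ by appending $m-1$ duplicates of each of the $x_{i_m}$- and $y_{i_m}$-columns, so $\rank(A_G)=\rank(A_H)$ and hence $\ell(H_{\mathcal{L}_G})=\ell(H_{\mathcal{L}_H})$.

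For the second power, which is the most delicate step, I would partition pairs $(C_1,C_2)$ of generators of $H_{\mathcal{L}_H}$ into three cases according to whether both, neither, or exactly one of $C_1,C_2$ contains $x_{i_m}$. In each case $u_{\phi(C_1)}u_{\phi(C_2)}=u_{C_1}u_{C_2}\cdot M$ for a fixed monomial $M$ equal to one of $(x_{i_1}\cdots x_{i_{m-1}})^2$, $(y_{i_1}\cdots y_{i_{m-1}})^2$, or $(x_{i_1}\cdots x_{i_{m-1}})(y_{i_1}\cdots y_{i_{m-1}})$. Crucially, the case of a product $u_{C_1}u_{C_2}$ is determined by its exponents at $x_{i_m},y_{i_m}$ (namely $(2,0),(0,2),(1,1)$ respectively), so if two such products coincide in $H_{\mathcal{L}_H}^2$ they automatically lie in the same case and the common factor $M$ cancels. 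This yields a bijection between distinct monomials in $H_{\mathcal{L}_H}^2$ and in $H_{\mathcal{L}_G}^2$, so $\mu(H_{\mathcal{L}_G}^2)=\mu(H_{\mathcal{L}_H}^2)$. Combining the three identities, the Freiman equality for $H_{\mathcal{L}_G}$ is equivalent to that for $H_{\mathcal{L}_H}$. The main obstacle is precisely this last bookkeeping step; the point that makes it work is that the three multipliers $M$ are already distinguishable through the $(x_{i_m},y_{i_m})$-exponents of $u_{C_1}u_{C_2}$, so no cross-case collision can occur.
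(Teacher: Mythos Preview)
Your proposal is correct and follows essentially the same strategy as the paper: both invoke Theorem~\ref{cover} to match generators, split products of pairs of generators into three cases according to the $(x_{i_m},y_{i_m})$-exponent to show $\mu(I^2)$ agrees, and use Lemma~\ref{rank} together with the column-duplication observation to equate the analytic spreads. The only difference is that the paper additionally computes the explicit value $\ell(H_{\mathcal{L}_H})=n-m+2$ by exhibiting specific covers of $H$, whereas your direct argument that $\rank(A_G)=\rank(A_H)$ via duplicated columns already suffices and is slightly cleaner.
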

\begin{proof} Without loss of generality, we can assume that  $i_j=j$ for all $j\in [m]$. By Theorem \ref{cover}, we obtain that
  generators of $H_{\mathcal{L}_G}$ can be obtained from    generators of $H_{\mathcal{L}_H}$.
Therefore, all  generators of $H_{\mathcal{L}_G}$  are of the form
$$u_1\prod\limits_{i=1}^{m}x_i,\ldots, u_s\prod\limits_{i=1}^{m}x_i, v_1\prod\limits_{i=1}^{m}y_i,\ldots, v_t\prod\limits_{i=1}^{m}y_i $$
where $x_{m}u_1,\ldots,x_{m}u_s,y_{m}v_1,\ldots,y_{m}v_t$ are all  generators of $H_{\mathcal{L}_H}$ and monomials $u_{i},v_{j}\in K[x_{m+1},\ldots,
x_{n},y_{m+1},\ldots,y_{n}]$ for all $i\in [s]$, $j\in [t]$. It follows that $$\mu(H_{\mathcal{L}_G})=\mu(H_{\mathcal{L}_H})\eqno(1)$$

Now we prove that $\mu(H^2_{\mathcal{L}_G})=\mu(H^2_{\mathcal{L}_H})$.
Note that
\begin{eqnarray*}H_{\mathcal{L}_G}&=&(u_1\prod\limits_{i=1}^{m}x_i,\ldots, u_s\prod\limits_{i=1}^{m}x_i, v_1\prod\limits_{i=1}^{m}y_i,\ldots, v_t\prod\limits_{i=1}^{m}y_i)\\
&=&(u_1,\ldots, u_s)\prod\limits_{i=1}^{m}x_i+(v_1,\ldots, v_t)\prod\limits_{i=1}^{m}y_i
\end{eqnarray*}
and
\begin{eqnarray*}H_{\mathcal{L}_H}&=&(x_{m}u_1,\ldots,x_{m}u_s,y_{m}v_1,\ldots,y_{m}v_t)\\
&=&(u_1,\ldots,u_s)x_{m}+(v_1,\ldots,v_t)y_{m}.
\end{eqnarray*}
It follows that
\begin{eqnarray*}H^2_{\mathcal{L}_G}&=&[(u_1,\ldots, u_s)\prod\limits_{i=1}^{m}x_i+(v_1,\ldots, v_t)\prod\limits_{i=1}^{m}y_i]^2\\
&=&(u_1,\ldots, u_s)^2\prod\limits_{i=1}^{m}x_i^2+(v_1,\ldots, v_t)^2\prod\limits_{i=1}^{m}y_i^2+(u_1,\ldots, u_s)(v_1,\ldots, v_t)\prod\limits_{i=1}^{m}x_iy_i
\end{eqnarray*}
and
\begin{eqnarray*}
H^2_{\mathcal{L}_H}&=&[(u_1,\ldots,u_s)x_{m}+(v_1,\ldots,v_t)y_{m}]^2\\
&=&(u_1,\ldots,u_s)^2x_{m}^2+(v_1,\ldots,v_t)^2y_{m}^2+(u_1,\ldots,u_s)(v_1,\ldots,v_t)x_{m}y_{m}.
\end{eqnarray*}

Note that  generators of ideals $(u_1,\ldots, u_s)^2\prod\limits_{i=1}^{m}x_i^2$, $(v_1,\ldots, v_t)^2\prod\limits_{i=1}^{m}y_i^2$ and
$(u_1,\ldots, u_s)\\ \cdot(v_1,\ldots, v_t)\prod\limits_{i=1}^{m}x_iy_i$ are different from each other, and so are  generators of ideals
$(u_1,\ldots,u_s)^2x_{m}^2$, $(v_1,\ldots,v_t)^2y_{m}^2$ and $(u_1,\ldots,u_s)(v_1,\ldots,v_t)x_{m}y_{m}$.
Thus we get \begin{eqnarray*}
 \hspace{1.0cm}
\mu(H^2_{\mathcal{L}_G})&=&\mu((u_{1},\ldots,u_{s})^{2})+\mu((v_{1},\ldots,v_{t})^{2})+\mu((u_{1},\ldots,u_{s})(v_{1},\ldots,v_{t}))\\
&=&\mu(H^2_{\mathcal{L}_H}).  \hspace{9.3cm} (2)
\end{eqnarray*}

By (1) and (2), it is enough to show that $\ell(H_{\mathcal{L}_G})=\ell(H_{\mathcal{L}_H})=n-m+2$.

Since $G$ is a connected  unmixed bipartite graph and  $K_{m,m}$ is its  unique induced subgraph, we obtain that $H$ is a C-M bipartite graph on the  vertex set $\{x_{m},\dots, x_n\}\cup \{y_{m},\dots, y_n\}$ by Theorem \ref{complete} and $H$ is connected. Indeed, there is some  edge in $G$ connects a vertex of $V_m$ and a vertex of $V_n\setminus V_m$. Say $x_{k}y_{\ell}\in E(G)$ with $k\in [n]\setminus [m]$ and $\ell\in [m]$. One has $x_{k}y_{m}\in E(H)$. Actually, if $\ell\in [m-1]$, then the desired result from Lemma \ref{unmixed} because of $x_{k}y_{\ell}\in E(H)$ and $x_{\ell}y_{m}\in E(H)$.
Note that   $\{y_{m},\ldots,y_{n}\}$ and $\{x_{m},\ldots,x_{j},y_{j+1},\ldots,y_{n}\}$ are the minimal vertex covers of $H$ for all $m\leq j\leq n$. It follows that
$$\{x_{m}\cdots x_{n},\,x_{m}\cdots x_{n-1}y_{n},\,\ldots,\,x_{m}y_{m+1}\cdots y_{n},\,y_{m}\cdots y_{n}\}\subset \mathcal{G}(H_{\mathcal{L}_H}).$$
Let $\mathcal{G}(H_{\mathcal{L}_H})=\{X^{\mathbf{b}_{1}},\ldots,X^{\mathbf{b}_{q}}\}$, where
$\mathbf{b}_{j}=(b_{mj},\ldots,b_{nj},1-b_{mj},\ldots,1-b_{nj})$,  $b_{ij}\in \{0,1\}$, $m\leq i\leq n$ and $j\in [q]$, is the exponential
vector of  monomial $X^{\mathbf{b}_{j}}=x_{m}^{b_{mj}}\cdots x_{n}^{b_{nj}}y_{m}^{1-b_{mj}}\cdots y_{n}^{1-b_{nj}}$, and
let $A$ be the $q\times 2(n-m+1)$ matrix whose rows are the vectors $\{\mathbf{b_1},\ldots,\mathbf{b_q}\}$.
Let $\mathbf{\alpha}_{0}=(\underbrace {1,\ldots,1}_{n-m+1},0,\ldots,0)$,  $\mathbf{\alpha}_{n-m+1}=(0,\ldots,0,\underbrace {1,\ldots,1}_{n-m+1})$,
$\mathbf{\alpha}_{i}=(\underbrace {1,\ldots,1}_{n-m+1-i},0,\ldots,0,\underbrace {1,\ldots,1}_i)\in \mathbb{Z}^{2(n-m+1)}$ for any $i\in [n-m]$,
then $\{\mathbf{\alpha}_{0},\mathbf{\alpha}_{1},\ldots,\mathbf{\alpha}_{n-m+1}\}\subset \{\mathbf{b_1},\ldots,\mathbf{b_q}\}$.
 Let $B$ be a matrix whose rows are the vectors $\{\alpha_{0},\alpha_{1},\ldots,\alpha_{n-m+1}\}$.
 Then $\text{rank}\,(B)=n-m+2$. This implies that $\text{rank}\,(A)\geq n-m+2$.

On the other hand, for any  $\mathbf{b}_{j}\in \{\mathbf{b_1},\ldots,\mathbf{b_q}\}$, we have
$$\mathbf{b}_{j}=b_{nj}\alpha_{0}+(b_{n-1,j}-b_{nj})\alpha_{1}+\cdots+(b_{mj}-b_{m+1,j})\alpha_{n-m}+(1-b_{mj})\alpha_{n-m+1}.$$
Therefore, $\text{rank}\,(A)=n-m+2$.

Let $\mathcal{G}(H_{\mathcal{L}_G})=\{X^{\mathbf{a}_{1}},\ldots,X^{\mathbf{a}_{q}}\}$, where
$\mathbf{a}_{j}=(a_{1j},\ldots,a_{nj},1-a_{1j},\ldots,1-a_{nj})$,  $a_{ij}\in \{0,1\}$, $i\in [n]$ and $j\in [q]$, is the exponential
vector of  $X^{\mathbf{a}_{j}}=x_{1}^{a_{1j}}\cdots x_{n}^{a_{nj}}y_{1}^{1-a_{1j}}\\
\cdot \cdots y_{n}^{1-a_{nj}}$, and let $D$ be the $q\times 2n$ matrix whose rows are the vectors $\{\mathbf{a_1},\ldots,\mathbf{a_q}\}$.
For convenience, we may assume that  $x_m$ is a factor of $X^{\mathbf{b}_{i}}$ for any $1\leq i\leq t$ and $y_m$ is a factor of $X^{\mathbf{b}_{i}}$ for any $t+1\leq i\leq n$.
By the relationship between  generators of ideal $H_{\mathcal{L}_G}$ and $H_{\mathcal{L}_H}$, we obtain that
$\prod\limits_{i=1}^{m}x_i$ is a factor of $X^{\mathbf{a}_{i}}$ for any $1\leq i\leq t$ and $\prod\limits_{i=1}^{m}y_i$ is a factor of $X^{\mathbf{a}_{i}}$ for any $t+1\leq i\leq n$. Thus the $i$-th column of matrix $D$ is a copy of its $m$-th column for $1\leq i\leq m-1$  and
the $j$-th column of matrix $D$ is a copy of its $(n+m)$-th column for $n+1\leq j\leq n+m-1$.
Therefore, $\text{rank}\,(D)=\text{rank}\,(A)$.
The desired result from Lemma \ref{rank}.
\end{proof}

\begin{Theorem}\label{multi}
Let $G$ be  a connected  unmixed bipartite graph on the vertex set $V_n=\{x_1,\dots, x_n\}\cup \{y_1,\dots, y_n\}$. Let $K_{m_{1},m_{1}},\ldots,K_{m_{s},m_{s}}$
be all induced complete bipartite subgraphs of $G$ on the subsets $V_{m_1},\ldots,V_{m_s}$ of $V(G)$ respectively, where $m_i\geq 2$ for any $i\in [s]$ and $s\geq 1$. Let $V_{m_i}=\{x_{i1},\ldots, x_{im_i}\}\cup \{y_{i1},\ldots, y_{im_i}\}$ and $H$ the induced subgraph of $G$  on the subset $V_n\setminus (\bigcup\limits_{i=1}^s V_{m_i-1})$. Let $H_{\mathcal{L}_G}$ {\em (resp. $H_{\mathcal{L}_H}$)} be the Hibi ideal of the lattice $\mathcal{L}_G$ {\em (resp. $\mathcal{L}_H$)}. Then $H_{\mathcal{L}_G}$ is Freiman if and only if $H_{\mathcal{L}_H}$ is Freiman.
\end{Theorem}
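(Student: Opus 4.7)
The plan is to proceed by induction on $s$, with Theorem \ref{main} supplying the base case $s=1$. For the inductive step ($s\ge 2$), the idea is to first apply Theorem \ref{main} with the single complete bipartite subgraph $K_{m_1,m_1}$ to deduce that $H_{\mathcal{L}_G}$ is Freiman if and only if $H_{\mathcal{L}_{G_1}}$ is Freiman, where $G_1$ is the induced subgraph of $G$ on $V_n\setminus V_{m_1-1}$; then to apply the inductive hypothesis to $G_1$ to finish.

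Before the reduction, I would record the preliminary observation that the vertex sets $V_{m_1},\ldots,V_{m_s}$ are pairwise disjoint. Indeed, if $V_{m_i}\cap V_{m_j}\neq\emptyset$ for some $i\neq j$, then iterated use of Lemma \ref{unmixed}(ii) would yield every edge between an $x$-vertex and a $y$-vertex of $V_{m_i}\cup V_{m_j}$, producing a strictly larger induced complete bipartite subgraph on $V_{m_i}\cup V_{m_j}$ and contradicting the maximality of the given list. As a consequence, $V_{m_1-1}$ is disjoint from each of $V_{m_2-1},\ldots,V_{m_s-1}$, so
\[
V_n\setminus\bigcup_{i=1}^{s} V_{m_i-1}\;=\;(V_n\setminus V_{m_1-1})\setminus\bigcup_{i=2}^{s} V_{m_i-1},
\]
and the induced subgraph of $G_1$ on this set is precisely the $H$ appearing in the statement.

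To invoke the inductive hypothesis, I would need to check that $G_1$ is a connected unmixed bipartite graph whose maximal induced complete bipartite subgraphs on pairs $\{x_i,y_i\}$ are exactly $K_{m_2,m_2},\ldots,K_{m_s,m_s}$. Bipartiteness is inherited from $G$; unmixedness follows from the bijection between minimal vertex covers of $G$ and of $G_1$ established in Theorem \ref{cover}; and connectedness is proved by the same propagation argument used in the proof of Theorem \ref{main}, namely that any edge of $G$ joining $V_{m_1}$ to its complement can be rerouted via Lemma \ref{unmixed}(ii) to an edge of $G_1$ incident to $x_{1,m_1}$ or $y_{1,m_1}$. For the list of maximal complete bipartite subgraphs, any induced $K_{t,t}$ of $G_1$ with $t\ge 2$ is automatically induced in $G$ and hence contained in some $K_{m_j,m_j}$; since $V_{m_1}\cap (V_n\setminus V_{m_1-1})=\{x_{1,m_1},y_{1,m_1}\}$ is just a single edge, no such $K_{t,t}$ can live inside $K_{m_1,m_1}$, forcing $j\neq 1$.

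The main obstacle I anticipate is precisely this structural bookkeeping on $G_1$: verifying that removing $V_{m_1-1}$ neither destroys connectedness nor creates any spurious new maximal induced complete bipartite subgraph, so that $G_1$ is a legitimate input for the inductive hypothesis with exactly $s-1$ maximal complete bipartite subgraphs. Once those structural facts are in place, the inductive step is essentially formal: Theorem \ref{main} handles the first reduction $G\leadsto G_1$, and the inductive hypothesis applied to $G_1$ yields the equivalence between $H_{\mathcal{L}_{G_1}}$ and $H_{\mathcal{L}_H}$ being Freiman, which combines with the first step to produce the desired conclusion.
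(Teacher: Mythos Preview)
Your proposal is correct and follows essentially the same approach as the paper: induction on $s$, with Theorem~\ref{main} handling the base case and the peeling of one $K_{m_i,m_i}$ at a time in the inductive step. The only cosmetic difference is that the paper peels off $K_{m_s,m_s}$ first (defining $H'$ on $V_n\setminus V_{m_s-1}$) whereas you peel off $K_{m_1,m_1}$; more substantively, you spell out the structural bookkeeping (disjointness of the $V_{m_i}$, connectedness and unmixedness of the intermediate graph, and identification of its remaining maximal $K_{m_j,m_j}$'s) that the paper compresses into the single phrase ``by Theorem~\ref{cover} and similar arguments in the proof of Theorem~\ref{main}''.
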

\begin{proof} We apply induction on $s$.  The case $s=1$ follows from Theorem \ref{main}. Now we
assume that $s\geq 2$. Let $H'$ be the induced subgraph of $G$  on the subset $V_n\setminus  V_{m_s-1}$. Then $H_{\mathcal{L}_G}$ is Freiman if and only if $H_{\mathcal{L}_{H'}}$ is Freiman by Theorem \ref{main}. Note that $H'$ is a
connected unmixed bipartite graph  by Theorem \ref{cover} and similar arguments in the proof of Theorem \ref{main}.
Thus, by induction hypothesis,  $H_{\mathcal{L}_{H'}}$ is Freiman  if and only if $H_{\mathcal{L}_H}$ is Freiman.
 The proof is completed.
 \end{proof}

\medskip
Two typical examples of Freiman  cover ideals of unmixed bipartite graphs  as in
Theorem \ref{multi} are shown in Figure  $2$ and Figure  $3$.

\begin{center}
\setlength{\unitlength}{1.5mm}
\begin{picture}(120,35)
\linethickness{1pt}
\thicklines
\multiput(5,10)(10,0){5}{\circle*{1}}
\multiput(5,30)(10,0){5}{\circle*{1}}
\multiput(5,10)(10,0){5}{\line(0,1){20}}
\multiput(5,30)(10,0){4}{\line(1,-2){10}}
\multiput(5,30)(10,0){3}{\line(1,-1){20}}
\multiput(5,30)(10,0){2}{\line(3,-2){30}}
\multiput(5,10)(20,0){2}{\line(1,2){10}}
\put(5,30){\line(2,-1){40}}
\put(4,32){$x_5$}
\put(14,32){$x_1$}
\put(24,32){$x_2$}
\put(34,32){$x_4$}
\put(44,32){$x_3$}
\put(4,6){$y_5$}
\put(14,6){$y_1$}
\put(24,6){$y_2$}
\put(34,6){$y_4$}
\put(44,6){$y_3$}
\put(19,-1){$Figure\ \ 2$}

\multiput(55,10)(10,0){5}{\circle*{1}}
\multiput(55,30)(10,0){5}{\circle*{1}}
\multiput(55,10)(10,0){5}{\line(0,1){20}}
\multiput(55,30)(10,0){4}{\line(1,-2){10}}
\multiput(55,30)(10,0){3}{\line(1,-1){20}}
\multiput(55,30)(10,0){2}{\line(3,-2){30}}
\multiput(65,10)(10,0){2}{\line(1,2){10}}
\put(55,30){\line(2,-1){40}}
\put(65,10){\line(1,1){20}}
\put(54,32){$x_1$}
\put(64,32){$x_4$}
\put(74,32){$x_2$}
\put(84,32){$x_5$}
\put(94,32){$x_3$}
\put(54,6){$y_1$}
\put(64,6){$y_4$}
\put(74,6){$y_2$}
\put(84,6){$y_5$}
\put(94,6){$y_3$}
\put(69,-1){$Figure\ \  3$}
\end{picture}
\end{center}

\vspace{0.5cm}

\medskip
For the convenience of the following description, we call the graph $G$ in theorem 1.6 {\em almost} complete C-M bipartite graph.
An immediate consequence of the above theorem and Theorem \label{zhao} is the following corollary.
\begin{Corollary}\label{good}
Let $G$ be  a  connected unmixed bipartite graph on the vertex set $V_n=\{x_1,\dots, x_n\}\cup \{y_1,\dots, y_n\}$. Then  $H_{\mathcal{L}_{G}}$ is Freiman  if and only if  $G$ is an almost  complete C-M bipartite graph,   or
there exist some  complete bipartite subgraphs $K_{m_{1},m_{1}},\ldots,K_{m_{s},m_{s}}$ of $G$ such that the induced graph of $G$ on
$V_n\setminus (\bigcup\limits_{i=1}^s V_{m_i-1})$ is an almost  complete C-M bipartite graph, where
 $V_{m_i}=\{x_{i1},\ldots, x_{im_i}\}\cup \{y_{i1},\ldots, y_{im_i}\}$ is  the vertex set of $K_{m_{i},m_{i}}$ for any $i\in [s]$.
\end{Corollary}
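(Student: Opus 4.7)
The plan is to deduce Corollary \ref{good} as an essentially immediate consequence of Theorems \ref{complete}, \ref{zhao}, and \ref{multi}, by splitting the argument into two cases based on whether $G$ contains an induced $K_{m,m}$ with $m\geq 2$, and then assembling the reduction and classification results.

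For the sufficiency direction, I would first handle the case where $G$ is already almost complete C-M: in that case $G$ is in particular C-M, and Theorem \ref{zhao} directly yields that $H_{\mathcal{L}_G}$ is Freiman. Otherwise, by hypothesis there exist induced complete bipartite subgraphs $K_{m_1,m_1},\ldots,K_{m_s,m_s}$ such that the subgraph $H$ induced on $V_n\setminus(\bigcup_{i=1}^{s}V_{m_i-1})$ is almost complete C-M. Applying Theorem \ref{zhao} to the C-M graph $H$ shows that $H_{\mathcal{L}_H}$ is Freiman, and a single invocation of Theorem \ref{multi} then promotes this to the conclusion that $H_{\mathcal{L}_G}$ is Freiman.

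For the necessity direction, I assume $H_{\mathcal{L}_G}$ is Freiman and split on whether $G$ contains any induced $K_{m,m}$ with $m\geq 2$. If it does not, Theorem \ref{complete} makes $G$ itself a C-M bipartite graph, and Theorem \ref{zhao} then forces $G$ to be almost complete C-M. If it does, I would enumerate a sufficient collection of induced complete bipartite subgraphs $K_{m_1,m_1},\ldots,K_{m_s,m_s}$ with the property that removing $V_{m_1-1},\ldots,V_{m_s-1}$ eliminates every induced $K_{2,2}$. Theorem \ref{multi} then asserts that $H_{\mathcal{L}_H}$ is still Freiman, Theorem \ref{complete} shows the reduced graph $H$ is C-M (since no induced $K_{2,2}$ survives), and Theorem \ref{zhao} places $H$ among the almost complete C-M bipartite graphs, exactly as required.

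The main obstacle is not in the logic of the reduction, which is already packaged in Theorems \ref{multi} and \ref{zhao}, but in the combinatorial step of choosing the collection $K_{m_1,m_1},\ldots,K_{m_s,m_s}$ in the necessity direction so that the stripped subgraph $H$ genuinely has no induced $K_{2,2}$ while the hypotheses of Theorem \ref{multi} remain applicable (i.e.\ the removed vertex sets $V_{m_i-1}$ interact compatibly). The natural recipe is to take a maximal collection of induced complete bipartite subgraphs of $G$ and argue, using Lemma \ref{unmixed}, that any residual induced $K_{2,2}$ in $H$ would, together with one of the stripped $V_{m_i-1}$, enlarge some $K_{m_i,m_i}$, contradicting maximality. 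Once this point is verified, the remainder of the proof is a routine assembly of the three theorems cited above.
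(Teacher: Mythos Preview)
Your proposal is correct and follows exactly the route the paper intends: the paper presents Corollary~\ref{good} as an immediate consequence of Theorem~\ref{multi} and Theorem~\ref{zhao} (with Theorem~\ref{complete} implicit), without giving an explicit proof. Your concern about the combinatorial step is legitimate but easier to settle than you suggest: the relation $i\sim j\iff \{x_i,y_j\},\{x_j,y_i\}\in E(G)$ is, by Lemma~\ref{unmixed}, an equivalence relation on $[n]$, so the maximal induced $K_{m,m}$'s (with matching $x$/$y$ index sets) are precisely the equivalence classes of size $\geq 2$ and are therefore pairwise disjoint; taking these as your $K_{m_i,m_i}$ ensures the stripped graph $H$ has no induced $K_{2,2}$, whence Theorem~\ref{complete} applies and Theorem~\ref{zhao} finishes the argument.
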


\medskip
The following example shows that the cover ideal  of the union of two  disjoint graphs, whose cover  ideals are  of Freiman, is not always Freiman ideal.
\begin{Example}
Let $H_{\mathcal{L}_{G_1}}=(x_1x_2,x_1y_2,y_1y_2)$, $H_{\mathcal{L}_{G_2}}=(x_3x_4,x_3y_4,y_3y_4)$ and $H_{\mathcal{L}_{G_1\sqcup G_2}}\\
=H_{\mathcal{L}_{G_1}}H_{\mathcal{L}_{G_2}}$
be cover ideals of graphs $G_1$, $G_2$ and $G_1\sqcup G_2$ in Figure $4$, respectively. By using CoCoA, we obtain that
$H^2_{\mathcal{L}_{G_1}}=(x_1^2x_2^2,x_1^2x_2y_2,x_1^2y_2^2,x_1x_2y_1y_2,x_1y_1y_2^2,y_1^2y_2^2)$, $H^2_{\mathcal{L}_{G_2}}=(x_3^2x_4^2,x_3^2x_4y_4,x_3^2y_4^2,x_3x_4y_3y_4,x_3y_3y_4^2,y_3^2y_4^2)$,
$H^2_{\mathcal{L}_{G_1\sqcup G_2}}=H^2_{\mathcal{L}_{G_1}}H^2_{\mathcal{L}_{G_2}}$, $\ell(H_{\mathcal{L}_{G_1}})=\ell(H_{\mathcal{L}_{G_2}})=3$
and $\ell(H_{\mathcal{L}_{G_1\sqcup G_2}})=5$. Thus $\mu(H^2_{\mathcal{L}_{G_i}})-\ell(H_{\mathcal{L}_{G_i}})\mu(H_{\mathcal{L}_{G_i}})+{\ell(H_{\mathcal{L}_{G_i}})\choose 2}=6-3\times 3+3=0$ for $i=1,2$
and $\mu(H^2_{\mathcal{L}_{G_1\sqcup G_2}})-\ell(H_{\mathcal{L}_{G_1\cup G_2}})\mu(H_{\mathcal{L}_{G_1\sqcup G_2}})+{\ell(H_{\mathcal{L}_{G_1\sqcup G_2}})\choose 2}=36-5\times 9+10=1$. Therefore, the cover ideals of graphs $G_1$ and $G_2$ are Freiman, but  the cover ideal of their disjoint union $G_1\sqcup G_2$ is not Freiman.
\end{Example}

\medskip
\hspace{-6mm} {\bf Acknowledgments}

 \vspace{3mm}
\hspace{-6mm}  This research is supported by the National Natural Science Foundation of China (No.11271275) and  by foundation of the Priority Academic Program Development of Jiangsu Higher Education Institutions.


\end{document}